\newtheorem{theorem}{Theorem}
\newtheorem{example}[theorem]{Example}
\newtheorem{lemma}[theorem]{Lemma}
\newenvironment{proof}[1][Proof]{\noindent\textbf{#1.} }{\ \rule{0.5em}{0.5em}}
\begin{document}

\title{Positive solutions for anisotropic discrete boundary value problems}
\author{Marek Galewski, Szymon G\l \c{a}b, Renata Wieteska}
\maketitle

\begin{abstract}
Using mountain pass arguments and the Karsuh-Kuhn-Tucker Theorem, we prove
the existence of at least two positive solution of the anisotropic discrete
Dirichlet boundary value problem. Our results generalize and improve those
of \cite{TG}.
\end{abstract}

\textbf{Math Subject Classifications}: 39A10, 34B18, 58E30.

\textbf{Key Words}: Discrete boundary value problem; variational methods;
mountain pass theorem; Karush-Kuhn-Tucker Theorem; positive solution;
anisotropic problem.

\section{Introduction}

In this note we consider an anisotropic difference equation with Dirichlet
type boundary condition on the form 
\begin{equation}
\left\{ 
\begin{array}{l}
\Delta \left( |\Delta y(k-1)|^{p(k-1)-2}\Delta y(k-1)\right) +f(k,y(k))=0,%
\text{ }k\in \left[ 1,T\right] \bigskip , \\ 
y(0)=y(T+1)=0,%
\end{array}%
\right.  \label{zad}
\end{equation}%
where $T\geq 2$ is a integer, $f:[1,T]\times 
%TCIMACRO{\U{211d} }%
%BeginExpansion
\mathbb{R}
%EndExpansion
\rightarrow (0,+\infty )$ is a continuous function; $[1,T]$ is a discrete
interval $\{1,2,...,T\},$ $\Delta y(k-1)=y\left( k\right) -y(k-1)$ is the
forward difference operator; $y\left( k\right) \in 
%TCIMACRO{\U{211d} }%
%BeginExpansion
\mathbb{R}
%EndExpansion
$ for all $k\in \left[ 1,T\right] $; $p:\left[ 0,T+1\right] \rightarrow
\lbrack 2,+\infty )$. Let $p^{-}=\min_{k\in \left[ 0,T+1\right] }p\left(
k\right) $; $p^{+}=\max_{k\in \left[ 0,T+1\right] }p\left( k\right) $.$%
\bigskip $

About the nonlinear term, we assume the following condition$\bigskip $

\textbf{(C.1)} There exist a number $m>p^{+}$ and \ functions $\varphi
_{1},\varphi _{2}:[1,T]\rightarrow (0,\infty )$\textit{, }$\psi _{1},\psi
_{2}:[1,T]\rightarrow \left( 0,\infty \right) $ such that%
\begin{equation*}
\psi _{1}(k)+\varphi _{1}(k)|y|^{m-2}y\leq f(k,y)\leq \varphi
_{2}(k)|y|^{m-2}y+\psi _{2}(k)
\end{equation*}%
for all $y\geq 0$ and all $k\in \left[ 1,T\right] .\bigskip $

Now, we will show the example of a function which satisfies condition 
\textbf{(C.1)}.

\begin{example}
\label{przyklad}Let $f:[1,T]\times 
%TCIMACRO{\U{211d} }%
%BeginExpansion
\mathbb{R}
%EndExpansion
\rightarrow (0,\infty )$ be given by 
\begin{equation*}
\bigskip f(k,y)=\left\vert y\right\vert ^{m-2}y\frac{2+arctg(y)}{T^{2}k}+%
\frac{\sin ^{2}(k)e^{-\left\vert y\right\vert }+1}{T^{3}}
\end{equation*}%
\textit{for }$\left( k,y\right) \in \lbrack 1,T]\times 
%TCIMACRO{\U{211d} }%
%BeginExpansion
\mathbb{R}
%EndExpansion
$; here $m>p^{+}$. We see that\texttt{\ }for $y\geq 0$\ we have%
\begin{equation*}
\frac{1}{T^{3}}+\frac{2}{T^{2}k}\left\vert y\right\vert ^{m-2}y\leq
f(k,y)\leq \frac{4+\pi }{2T^{2}k}\left\vert y\right\vert ^{m-2}y+\frac{2}{%
T^{3}}.
\end{equation*}%
Thus we may put 
\begin{equation*}
\varphi _{1}(k)=\frac{2}{T^{2}k};\text{ }\varphi _{2}(k)=\frac{4+\pi }{%
2T^{2}k};\text{ }\psi _{1}(k)=\frac{1}{T^{3}};\text{ }\psi _{2}(k)=\frac{2}{%
T^{3}}.
\end{equation*}
\end{example}

Solutions to \eqref{zad}\ will be investigated in a space

\begin{equation*}
Y=\{y:[0,T+1]\rightarrow \mathbb{R}:y(0)=y(T+1)=0\}
\end{equation*}%
considered with a norm 
\begin{equation*}
\Vert y\Vert =\left( \sum_{k=1}^{T+1}|\Delta y(k-1)|^{2}\right) ^{1/2}
\end{equation*}%
with which $Y$ becomes a Hilbert space. For $y\in Y$ \ let 
\begin{equation*}
y_{+}=\max \{y,0\},\text{ \ \ }y_{-}=\max \{-y,0\}.
\end{equation*}%
Note that 
\begin{equation*}
y_{+}\geq 0\text{ and }y_{-}\geq 0;\text{ \ \ }y=y_{+}-y_{-};\text{ \ \ }%
y_{+}\cdot y_{-}=0.
\end{equation*}

In order to demonstrate that problem \eqref{zad} has at least two positive
solutions we assume additionally the following condition

\textbf{(C.2)}\ $\ \ \ \ \ \ \ T^{\frac{p^{+}-2}{2}}\left( \frac{1}{\sqrt{T+1%
}}\right) ^{p^{+}}>\sum\limits_{k=1}^{T}\left( \varphi _{2}(k)+\psi
_{2}(k)\right) .\bigskip $

\begin{example}
We show how assumption \textbf{(C.2)} is verified in Example \ref{przyklad}.
Taking $p^{+}=18$ and $T=200$ we see that 
\begin{equation*}
T^{\frac{p^{+}-2}{2}}\left( \frac{1}{\sqrt{T+1}}\right)
^{p^{+}}=0.009>0.002=\sum\limits_{k=1}^{T}\left( \varphi _{2}(k)+\psi
_{2}(k)\right) .
\end{equation*}
\end{example}

\begin{theorem}
\label{maintheorem}Suppose that assumptions \textbf{(C.1) }and\textbf{\ (C.2)%
} hold.{\ }Then \eqref{zad} has at least two positive solutions.
\end{theorem}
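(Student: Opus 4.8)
The plan is to use a standard variational approach combined with two distinct critical point arguments, exactly as the abstract advertises (mountain pass plus Karush-Kuhn-Tucker). I would first set up the energy functional whose critical points are solutions of \eqref{zad}. Define
\begin{equation*}
J(y)=\sum_{k=1}^{T+1}\frac{1}{p(k-1)}|\Delta y(k-1)|^{p(k-1)}-\sum_{k=1}^{T}F(k,y(k)),
\end{equation*}
where $F(k,y)=\int_{0}^{y}f(k,s)\,ds$. A routine summation-by-parts computation (using the boundary conditions $y(0)=y(T+1)=0$) shows that $J\in C^{1}(Y,\mathbb{R})$ and that $J'(y)=0$ if and only if $y$ solves \eqref{zad}. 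To force \emph{positive} solutions rather than sign-changing ones, the trick is to truncate: replace $f(k,y)$ by $\widetilde{f}(k,y)=f(k,y_{+})$ (equivalently work with a modified functional built from $f(k,\cdot)$ evaluated at $y_{+}$), so that any nonzero critical point of the modified functional is automatically nonnegative. This is where the splitting $y=y_{+}-y_{-}$ and the identities $y_{+}y_{-}=0$ recorded in the excerpt come into play: testing the Euler-Lagrange equation of the truncated functional against $y_{-}$ and using the strict positivity $f>0$ forces $y_{-}\equiv 0$, hence $y\ge 0$; a discrete maximum-principle / componentwise argument then upgrades this to $y(k)>0$ on the interior.

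For the \emph{first} positive solution I would run the Mountain Pass Theorem on the truncated functional $\widetilde{J}$. The two geometric hypotheses come directly from condition \textbf{(C.1)}. The lower bound $\psi_1(k)+\varphi_1(k)|y|^{m-2}y\le f(k,y)$ integrates to a superquadratic (indeed super-$p^{+}$) growth of $F$ of order $|y|^{m}$ with $m>p^{+}$; since the leading kinetic term in $J$ grows at most like $\|y\|^{p^{+}}$, this yields an Ambrosetti--Rabinowitz-type condition and shows $\widetilde{J}(ty_{0})\to-\infty$ along a fixed direction $y_0\ge 0$, giving a point beyond the mountain ``pass.'' The upper bound $f(k,y)\le\varphi_2(k)|y|^{m-2}y+\psi_2(k)$ together with the finite-dimensionality of $Y$ (all norms on $Y$ are equivalent, and the Palais--Smale condition is automatic because bounded sequences in a finite-dimensional space have convergent subsequences) gives local positivity of $\widetilde{J}$ on a small sphere. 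The Ambrosetti--Rabinowitz growth also supplies coercivity-on-bounded-sets control needed to verify that any Palais--Smale sequence is bounded. Applying the Mountain Pass Theorem then produces a nontrivial critical point $y_{1}$ with $\widetilde{J}(y_{1})>0$, and the truncation argument makes it positive.

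For the \emph{second} positive solution I would solve a constrained minimization problem and invoke the Karush--Kuhn--Tucker Theorem, which is presumably why condition \textbf{(C.2)} is imposed in its precise quantitative form. The idea is to minimize $\widetilde{J}$ (or a closely related functional) over the closed ball $\{y:\|y\|\le R\}$ for a suitable radius $R$; the norm equivalence constants on the finite-dimensional space $Y$ are exactly what convert the abstract growth data into the explicit inequality in \textbf{(C.2)}. Condition \textbf{(C.2)} should guarantee that the constrained minimum is attained at an \emph{interior} point of a small ball, distinct from the mountain-pass solution $y_{1}$, with the KKT multiplier turning the constrained critical point into an honest critical point of $\widetilde{J}$ (the multiplier must vanish, or else the constraint is inactive). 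Here the constant $T^{(p^{+}-2)/2}(1/\sqrt{T+1})^{p^{+}}$ looks precisely like the Sobolev-type embedding constant relating $\|y\|_{\infty}$ (or the $\ell^{p^+}$ difference norm) to the Hilbert norm $\|y\|$ on $Y$, so \textbf{(C.2)} encodes that the perturbative data $\sum(\varphi_2+\psi_2)$ is small enough for a local minimizer near the origin to exist below the mountain-pass level. The main obstacle I anticipate is precisely this bookkeeping: carefully tracking the embedding constants on $Y$ so that \textbf{(C.2)} yields both a second critical point and its distinctness from $y_{1}$, and verifying that the KKT multiplier indeed vanishes so that the constrained minimizer is a genuine solution of \eqref{zad} rather than of an auxiliary problem.
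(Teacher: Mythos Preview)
Your overall architecture is right and matches the paper: work with the truncated functional $\widetilde J(y)=\sum\frac{1}{p(k-1)}|\Delta y(k-1)|^{p(k-1)}-\sum F(k,y_{+}(k))$, use a discrete maximum principle to turn critical points into strictly positive solutions, and combine a mountain-pass argument with a KKT/constrained-minimization argument to get two distinct critical points.

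There is, however, a real gap in your mountain-pass geometry. You claim that the upper bound in \textbf{(C.1)} yields ``local positivity of $\widetilde J$ on a small sphere,'' i.e.\ that the origin can serve as the mountain-pass base point. It cannot. Since $f(k,0)\ge \psi_1(k)>0$, one has $F(k,t)\ge \psi_1(k)t$ for small $t>0$, so for $y\ge 0$ small the nonlinear part contributes a term of order $\|y\|$, while the kinetic part is of order $\|y\|^{p^{+}}$ with $p^{+}\ge 2$. Thus $\widetilde J$ is \emph{negative} along positive directions near $0$, and $0$ is not a local minimum; the standard small-sphere estimate fails precisely because of the $\psi_i$ terms.

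The paper fixes this by reversing the order of your two steps and linking them. First it minimizes $\widetilde J$ over the closed ball $B=\{y:\|y\|\le (T+1)^{-1/2}\}$. The KKT theorem is then applied not to produce an independent second solution, but to rule out that this minimizer $y_0$ sits on $\partial B$: if it did, the KKT identity tested against $y_0$ together with \textbf{(A.2)} and the upper bound in \textbf{(C.1)} would give
\[
T^{\frac{p^{+}-2}{2}}\Bigl(\tfrac{1}{\sqrt{T+1}}\Bigr)^{p^{+}}\le \sum_{k=1}^{T}\bigl(\varphi_2(k)+\psi_2(k)\bigr),
\]
contradicting \textbf{(C.2)}. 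Hence $y_0$ lies in the interior of $B$ and is a genuine (unconstrained) local minimizer of $\widetilde J$. Only then is the mountain-pass lemma invoked, with $y_0$ (not $0$) as the base point, $\partial B$ as the separating sphere, and a far-away point $y_{\lambda}$ (constant equal to $\lambda$ on $[1,T]$, with $\widetilde J(y_\lambda)\to -\infty$ since $m>p^{+}$) as the outer point. The resulting mountain-pass critical value exceeds $\max\{\widetilde J(y_0),\widetilde J(y_{\lambda_0})\}$, which is what guarantees the two critical points are distinct. So the KKT step and the mountain-pass step are not parallel, independent arguments as you present them; the first is a prerequisite for the geometry of the second.
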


Discrete BVPs received some attention lately. Let us mention, far from being
exhaustive, the following recent papers on discrete BVPs investigated via
critical point theory, \cite{agrawal}, \cite{CIT}, \cite{caiYu}, \cite{Liu}, 
\cite{sehlik}, \cite{TianZeng}, \cite{teraz}, \cite{zhangcheng}, \cite%
{nonzero}. The tools employed cover the Morse theory, mountain pass
methodology, linking arguments, i.e. methods usually applied in continuous
problems.\bigskip

Continuous versions of problems like (\ref{zad}) are known to be
mathematical models of various phenomena arising in the study of elastic
mechanics (see \cite{B}), electrorheological fluids (see \cite{A}) or image
restoration (see \cite{C}). Variational continuous anisotropic problems have
been started by Fan and Zhang in \cite{D} and later considered by many
methods and authors (see \cite{hasto} for an extensive survey of such
boundary value problems). The research concerning the discrete anisotropic
problems of type (\ref{zad}) have only been started (see \cite{KoneOuro}, 
\cite{MRT} where known tools from the critical point theory are applied in
order to get the existence of solutions).\bigskip

When compared with \cite{TG} we see that our problem is more general since
we consider variable exponent case instead of a constant one. While we do
not include term depending on $\Phi _{p^{-}}(y)=|y|^{p^{-}-2}y$ in the
nonlinear part as is the case in \cite{TG}, it is apparent that our results
would also hold should we have made our nonlinearity more complicated. We
note that term $\Phi _{p^{-}}(y)=|y|^{p^{-}-2}y$ does not influence the
growth of the nonlinearity.\bigskip

\section{Auxiliary results}

We connect positive solutions to (\ref{zad}) with critical points of
suitably chosen action functional. Let 
\begin{equation*}
F(k,y)=\int_{0}^{y}f(k,s)ds\text{ for }y\in \mathbb{R}\text{ and }k\in \left[
1,T\right] \text{.}
\end{equation*}%
Let us define a functional $J:Y\rightarrow R$\ by the formula%
\begin{equation*}
J(y)=\sum_{k=1}^{T+1}\frac{1}{p(k-1)}|\Delta
y(k-1)|^{p(k-1)}-\sum_{k=1}^{T}F(k,y_{+}(k)).
\end{equation*}%
Functional $J$\ is sligthly different from functionals applied in
investigating the existence of positive solutions, compare with \cite%
{TianZeng}. Thus we indicate its properties. The functional $J$\ is
continuously G\^{a}teaux differentiable and its G\^{a}teaux derivative $%
J^{\prime }$\ at $y$\ reads\texttt{\ }%
\begin{equation}
\begin{array}{l}
\langle J^{\prime }(y),v\rangle =\sum\limits_{k=1}^{T+1}|\Delta
y(k-1)|^{p(k-1)-2}\Delta y(k-1)\Delta v(k-1)-\bigskip \\ 
\sum\limits_{k=1}^{T}f(k,y_{+}(k))v(k)%
\end{array}
\label{functional1}
\end{equation}%
for all $v\in Y$. Suppose that $y$\ is a critical point to $J$, i.e. $%
\langle J^{\prime }(y),v\rangle =0$\ for all $v\in Y$. Summing by parts and
taking boundary values into account, see \cite{GW}, we observe that 
\begin{equation*}
\begin{array}{l}
0=-\sum\limits_{k=1}^{T+1}\Delta (|\Delta y(k-1)|^{p(k-1)-2}\Delta
y(k-1))v(k)-\bigskip \\ 
\sum\limits_{k=1}^{T}f(k,y_{+}(k))v(k).%
\end{array}%
\end{equation*}%
Since $v\in Y$\ is arbitrary\ we see that $y$\ satisfies (\ref{zad}).\bigskip

Now, we recall some auxiliary materials which we use later on, \textbf{(A.1)}%
-\textbf{(A.3)} see \cite{MRT}, \textbf{(A.4)}, \textbf{(A.5)} see \cite{GW}%
, \textbf{(A.6)} see \cite{TianZeng}:\bigskip

\textbf{(A.1)} \textit{For every} $y\in Y$ \textit{with} $\Vert y\Vert >1$ 
\textit{we have }%
\begin{equation*}
\sum_{k=1}^{T+1}|\Delta y(k-1)|^{p(k-1)}\geq T^{\frac{2-p^{-}}{2}}\Vert
y\Vert ^{p^{-}}-T.\bigskip
\end{equation*}

\textbf{(A.2)} \textit{For every }$\mathit{\ }y\in Y$ \textit{with} $\Vert
y\Vert \leq 1$ \textit{we have }%
\begin{equation*}
\sum\limits_{k=1}^{T+1}\left\vert \Delta y(k-1)\right\vert ^{p(k-1)}\geq T^{%
\frac{p^{+}-2}{2}}\Vert y\Vert ^{p^{+}}\text{.}\bigskip
\end{equation*}

\textbf{(A.3) }\textit{For every }$y\in Y$ \textit{and} \textit{for any}$%
\mathit{\ }m\geq 2$ \textit{we have }

\begin{equation*}
(T+1)^{\frac{2-m}{2}}\left\Vert y\right\Vert ^{m}\leq
\sum\limits_{k=1}^{T+1}\left\vert \Delta y(k-1)\right\vert ^{m}\leq
(T+1)\left\Vert y\right\Vert ^{m}.\bigskip
\end{equation*}

\textbf{(A.4)}\textit{\ If }$p^{+}\geq 2$\textit{,\ there exists }$%
C_{p^{+}}>0$ \textit{\ such that for every } $y\in Y$

\textit{\ }%
\begin{equation*}
\sum\limits_{k=1}^{T+1}\left\vert \Delta y(k-1)\right\vert ^{p(k-1)}\leq
2^{p^{+}}(T+1)\left( C_{p^{+}}\left\Vert y\right\Vert ^{p^{+}}+1\right) 
\text{.\ }\bigskip
\end{equation*}

\textbf{(A.5)} \textit{For every }$y\in Y$ \textit{and} \textit{for any }$%
m\geq 2$\textit{\ } \textit{we have }

\begin{equation*}
\sum\limits_{k=1}^{T+1}\left\vert \Delta y(k-1)\right\vert ^{m}\leq
2^{m}\sum\limits_{k=1}^{T}\left\vert y(k)\right\vert ^{m}.\bigskip
\end{equation*}

\textbf{(A.6)} \textit{For every }$y\in Y$ \textit{and} \textit{for any }$%
p,q>1$\textit{\ such that }$\frac{1}{p}+\frac{1}{q}=1$ \textit{we have }%
\begin{equation*}
\left\Vert y\right\Vert _{C}=\max_{k\in \lbrack 1,T]}\left\vert
y(k)\right\vert \leq (T+1)^{\frac{1}{q}}\left( \sum_{k=1}^{T+1}|\Delta
y(k-1)|^{p}\right) ^{1/p}.\bigskip \text{ }
\end{equation*}

Let $E$ be a real Banach space. We say that a functional $J:E\rightarrow 
\mathbb{R}$ satisfies Palais-Smale condition if every sequence $(y_{n})$
such that $\{J(y_{n})\}$ is bounded and $J^{\prime }(y_{n})\rightarrow 0$,
has a convergent subsequence.\bigskip

\begin{lemma}
\cite{mp}\label{lem2} Let $E$ be a Banach space and $J\in C^{1}(E,\mathbb{R}%
) $ satisfy Palais-Smale condition. Assume that there exist $x_{0},x_{1}\in
E $ and a bounded open neighborhood $\Omega $ of $x_{0}$ such that $%
x_{1}\notin \overline{\Omega }$ and 
\begin{equation*}
\max \{J(x_{0}),J(x_{1})\}<\inf_{x\in \partial \Omega }J(x).
\end{equation*}%
Let 
\begin{equation*}
\Gamma =\{h\in C([0,1],E):h(0)=x_{0},h(1)=x_{1}\}
\end{equation*}%
and 
\begin{equation*}
c=\inf_{h\in \Gamma }\max_{s\in \lbrack 0,1]}J(h(s)).
\end{equation*}%
Then $c$ is a critical value of $J$; that is, there exists $x^{\star }\in E$
such that $J^{\prime }(x^{\star })=0$ and $J(x^{\star })=c$, where $c>\max
\{J(x_{0}),J(x_{1})\}$.\bigskip
\end{lemma}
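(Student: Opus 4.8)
The plan is to prove this minimax characterization by the classical deformation-theoretic argument: first show that the candidate value $c$ is a genuine mountain-pass level lying strictly above the two endpoints, and then exclude the possibility that $c$ fails to be critical by producing a deformation that would push an almost-optimal path below the level $c$, contradicting the definition of $c$ as an infimum. First I would establish $c>\max\{J(x_{0}),J(x_{1})\}$. The geometric heart of this step is that every admissible path must cross $\partial\Omega$: since $[0,1]$ is connected while $h^{-1}(\Omega)$ and $h^{-1}(E\setminus\overline{\Omega})$ are disjoint relatively open sets containing $0$ and $1$ respectively, they cannot cover $[0,1]$, so some $s$ has $h(s)\in\partial\Omega$. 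Hence $\max_{s\in[0,1]}J(h(s))\ge\inf_{x\in\partial\Omega}J(x)$ for every $h\in\Gamma$, and passing to the infimum over $\Gamma$ gives $c\ge\inf_{\partial\Omega}J>\max\{J(x_{0}),J(x_{1})\}$ by hypothesis; evaluating $J$ along the fixed segment $s\mapsto(1-s)x_{0}+sx_{1}$ shows $c<+\infty$, so $c\in\mathbb{R}$.

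Next I would argue by contradiction, supposing that $c$ is not a critical value. Here the Palais--Smale condition performs the essential work: if no critical point lies at level $c$, then there exist $\alpha>0$ and $\varepsilon_{0}>0$ with $\|J^{\prime}(x)\|\ge\alpha$ whenever $|J(x)-c|\le\varepsilon_{0}$. Indeed, were this false one could choose $x_{n}$ with $J(x_{n})\to c$ and $J^{\prime}(x_{n})\to 0$, extract a convergent subsequence by Palais--Smale, and obtain in the limit a critical point at level $c$, a contradiction. With this uniform gradient lower bound on the strip $\{x:|J(x)-c|\le\varepsilon_{0}\}$, the quantitative deformation lemma supplies, for some $0<\varepsilon<\min\{\varepsilon_{0},\,c-\max\{J(x_{0}),J(x_{1})\}\}$, a continuous map $\eta:E\to E$ that fixes every point off the strip and satisfies $\eta(J^{c+\varepsilon})\subseteq J^{c-\varepsilon}$, where $J^{a}=\{x\in E:J(x)\le a\}$.

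Finally I would close the argument. By definition of $c$ as an infimum I may select $h\in\Gamma$ with $\max_{s}J(h(s))<c+\varepsilon$, i.e. $h([0,1])\subseteq J^{c+\varepsilon}$. Since $J(x_{0}),J(x_{1})<c-\varepsilon$, the endpoints lie outside the strip and are therefore fixed by $\eta$, so $\tilde h=\eta\circ h$ is again a path in $\Gamma$. But then $h([0,1])\subseteq J^{c+\varepsilon}$ forces $\tilde h([0,1])\subseteq J^{c-\varepsilon}$, whence $\max_{s}J(\tilde h(s))\le c-\varepsilon<c$, contradicting $c=\inf_{h\in\Gamma}\max_{s}J(h(s))$. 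This contradiction shows that $c$ must be a critical value, and the strict inequality $c>\max\{J(x_{0}),J(x_{1})\}$ from the first step completes the conclusion.

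The main obstacle is the construction of the deformation $\eta$ on a general Banach space $E$, where no gradient flow is available a priori: one must build a locally Lipschitz pseudo-gradient vector field $V$ satisfying $\|V(x)\|\le 2\|J^{\prime}(x)\|$ and $\langle J^{\prime}(x),V(x)\rangle\ge\|J^{\prime}(x)\|^{2}$, cut it off smoothly so the flow stays confined to the strip $|J-c|\le\varepsilon_{0}$, and integrate $-V$ for a controlled time so that $J$ decreases by at least $2\varepsilon$ along trajectories starting in $J^{c+\varepsilon}$. Verifying global existence of this flow (using the gradient gap to bound trajectory speed) and checking that it carries the sublevel set as required is the technical core. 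For the present application, however, $E$ is the finite-dimensional Hilbert space $Y$, so $J^{\prime}$ is represented by an honest gradient and the pseudo-gradient construction reduces to the ordinary negative gradient flow, which simplifies this step considerably.
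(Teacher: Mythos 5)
The paper offers no proof of this lemma to compare against: it is quoted verbatim with a citation to Guo's book \cite{mp}, and the authors use it as a black box. Your proposal supplies the standard proof, and it is essentially correct. The three ingredients are all in order: the connectedness argument showing every $h\in\Gamma$ meets $\partial\Omega$ (note $\partial\Omega=\overline{\Omega}\setminus\Omega$ since $\Omega$ is open), which yields $c\geq\inf_{\partial\Omega}J>\max\{J(x_0),J(x_1)\}$ and, via the segment path, $c<+\infty$; the Palais--Smale-based uniform lower bound $\|J'(x)\|\geq\alpha$ on a strip $|J(x)-c|\leq\varepsilon_0$ when $c$ is assumed noncritical; and the deformation of an almost-optimal path to contradict the infimum. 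Your closing remark that for the paper's application $E=Y$ is finite-dimensional, so the pseudo-gradient machinery collapses to the ordinary negative gradient flow, is also apt.

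One technical slip deserves fixing. You take $\eta$ to be the identity only off the strip $\{x:|J(x)-c|\leq\varepsilon_0\}$, and then claim the endpoints are fixed because $J(x_0),J(x_1)<c-\varepsilon$. But $\varepsilon<\varepsilon_0$ gives $c-\varepsilon>c-\varepsilon_0$, so this inequality does not place $x_0,x_1$ outside that strip, and $\eta\circ h\in\Gamma$ is not yet justified. The repair is standard and cheap: either shrink $\varepsilon_0$ at the outset so that also $\varepsilon_0<c-\max\{J(x_0),J(x_1)\}$ (the gradient lower bound persists a fortiori on the smaller strip), or invoke the quantitative deformation lemma in its usual form, where $\eta$ is the identity outside $J^{-1}([c-2\varepsilon,c+2\varepsilon])$, and choose $2\varepsilon<c-\max\{J(x_0),J(x_1)\}$. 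Relatedly, the deformation lemma also requires $\varepsilon$ small relative to the gradient gap $\alpha$ (e.g.\ $\varepsilon<\alpha^2/8$ in the common normalization); your phrasing ``for some $\varepsilon$'' accommodates this, but it is worth stating explicitly. With these quantifiers tightened, the proof is complete.
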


Finally we recall the Karush-Kuhn-Tucker theorem with Slater qualification
conditions (for one constraint), see \cite{borwein}:

\begin{theorem}
\label{KKT-THEO} Let $X$ be a finite-dimensional Euclidean space, $%
\eta,\mu:X\to\mathbb{R}$ be differentiable functions, with $\mu$ convex and $%
\inf_X \mu<0$, and $S=\{x\in X:\mu(x)\leq 0\}$. Moreover, let $\overline{x}%
\in S$ be such that $\eta(\overline{x})=\inf_S\eta$. Then, there exists $%
\sigma\geq 0$ such that 
\begin{equation*}
\eta ^{\prime }(\overline{x})+\sigma\mu ^{\prime }(\overline{x})=0%
\mbox{\ \
and \ \ }\sigma\mu(\overline{x})=0.
\end{equation*}
\end{theorem}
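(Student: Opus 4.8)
The plan is to prove the multiplier rule directly by a case analysis on whether the constraint is active at $\overline{x}$, using Slater's hypothesis $\inf_X\mu<0$ precisely to guarantee the constraint qualification $\mu'(\overline{x})\neq 0$. First I would dispose of the \emph{inactive} case $\mu(\overline{x})<0$. Since $\mu$ is continuous, $\mu<0$ on a whole neighborhood of $\overline{x}$, so that neighborhood lies inside $S$ and $\overline{x}$ is a local minimizer of $\eta$ on an open set. Fermat's rule gives $\eta'(\overline{x})=0$, and the choice $\sigma=0$ satisfies both $\eta'(\overline{x})+\sigma\mu'(\overline{x})=0$ and $\sigma\mu(\overline{x})=0$.

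The substantive case is the \emph{active} one, $\mu(\overline{x})=0$, where complementary slackness $\sigma\mu(\overline{x})=0$ holds automatically, so the task is to produce $\sigma\ge 0$ with $\eta'(\overline{x})=-\sigma\mu'(\overline{x})$. Here I would invoke Slater's condition: pick $x_0$ with $\mu(x_0)<0$ and set $d_0=x_0-\overline{x}$. Convexity and differentiability of $\mu$ yield the gradient inequality $\mu(x_0)\ge\mu(\overline{x})+\langle\mu'(\overline{x}),d_0\rangle$, whence $\langle\mu'(\overline{x}),d_0\rangle\le\mu(x_0)<0$. In particular $\mu'(\overline{x})\neq 0$, which is the constraint qualification, and $d_0$ is an explicit direction along which the constraint strictly decreases.

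Next I would establish the key variational inequality: for every direction $d$ with $\langle\mu'(\overline{x}),d\rangle<0$ one has $\langle\eta'(\overline{x}),d\rangle\ge 0$. Indeed a first-order expansion gives $\mu(\overline{x}+td)=t\langle\mu'(\overline{x}),d\rangle+o(t)<0$ for small $t>0$, so $\overline{x}+td\in S$; minimality of $\eta$ over $S$ then forces $\eta(\overline{x}+td)\ge\eta(\overline{x})$, and dividing by $t$ and letting $t\to 0^{+}$ yields $\langle\eta'(\overline{x}),d\rangle\ge 0$. Using the auxiliary direction $d_0$ I would upgrade this to the non-strict form: if $\langle\mu'(\overline{x}),d\rangle\le 0$, then $\langle\mu'(\overline{x}),d+td_0\rangle<0$ for every $t>0$, so $\langle\eta'(\overline{x}),d+td_0\rangle\ge 0$, and $t\to 0^{+}$ gives $\langle\eta'(\overline{x}),d\rangle\ge 0$.

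Finally I would convert this half-space containment into the multiplier. Writing $a=\eta'(\overline{x})$ and $b=\mu'(\overline{x})\neq 0$, what has been shown is the inclusion $\{d:\langle b,d\rangle\le 0\}\subseteq\{d:\langle -a,d\rangle\le 0\}$. Testing on the hyperplane $b^{\perp}$ (both $d$ and $-d$ lie in the left-hand set) forces $\langle a,d\rangle=0$ for all $d\perp b$, so $a\in\mathrm{span}(b)$, say $a=-\sigma b$; testing on $d=-b$ gives $-\sigma\|b\|^{2}\le 0$, hence $\sigma\ge 0$. This is exactly $\eta'(\overline{x})+\sigma\mu'(\overline{x})=0$ with $\sigma\ge 0$, and $\sigma\mu(\overline{x})=0$ since $\mu(\overline{x})=0$. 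I expect the main obstacle to be the active case, and within it the careful use of Slater's condition to rule out the degenerate possibility $\mu'(\overline{x})=0$ (which would break the multiplier rule entirely); the concluding step is then the elementary separating-hyperplane argument, i.e.\ Farkas' lemma for a single constraint.
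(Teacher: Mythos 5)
Your proof is correct, but note that there is nothing internal to compare it against: the paper does not prove Theorem \ref{KKT-THEO} at all, it simply recalls the statement from \cite{borwein}, where it is obtained from general convex-analytic machinery (separation/Fritz John arguments covering many constraints). Your argument is a genuinely self-contained and more elementary substitute for the citation, and every step checks out: the inactive case $\mu(\overline{x})<0$ reduces to Fermat's rule with $\sigma=0$ (differentiability gives the continuity you need); in the active case you use convexity exactly where it is indispensable, namely the gradient inequality $\mu(x_0)\geq\mu(\overline{x})+\langle\mu'(\overline{x}),x_0-\overline{x}\rangle$, which turns Slater's hypothesis into both $\mu'(\overline{x})\neq 0$ and the strictly decreasing direction $d_0$; the feasible-direction inequality $\langle\mu'(\overline{x}),d\rangle<0\Rightarrow\langle\eta'(\overline{x}),d\rangle\geq 0$ via the expansion $\mu(\overline{x}+td)=t\langle\mu'(\overline{x}),d\rangle+o(t)$ needs only directional differentiability, so no gap there; the upgrade to the closed half-space by perturbing with $td_0$ and letting $t\to 0^{+}$ is sound; and the concluding one-constraint Farkas step (testing $b^{\perp}$ to get $\eta'(\overline{x})\in\mathrm{span}(\mu'(\overline{x}))$, then $d=-b$ for the sign, with the degenerate possibility $\eta'(\overline{x})=0$ harmlessly absorbed as $\sigma=0$) is valid precisely because you have already ruled out $\mu'(\overline{x})=0$. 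What your route buys is that the paper's appeal to \cite{borwein} could be replaced by two short elementary lemmas using nothing beyond directional derivatives and linear algebra; what the citation buys is generality (multiple constraints, nonsmooth data) that the paper does not actually need, since it applies the theorem only to the single smooth convex constraint $\mu(y)=\frac{\Vert y\Vert^{2}}{2}-\frac{1}{2(T+1)}$, for which your hypotheses are exactly satisfied.
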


We will provide now some results which are used in the proof of the Main
Theorem. The following lemma may be viewed as a kind of a discrete maximum
principle.

\begin{lemma}
\label{lem4} Assume that $y\in Y$ is a solution of the equation%
\begin{equation}
\left\{ 
\begin{array}{l}
\Delta \left( |\Delta y(k-1)|^{p(k-1)-2}\Delta y(k-1)\right)
+f(k,y_{+}(k))=0,k\in \left[ 1,T\right] ,\bigskip \\ 
y(0)=y(T+1)=0,%
\end{array}%
\right.  \label{UKL2}
\end{equation}%
then $y\left( k\right) >0$ for all $k\in \left[ 1,T\right] $ and moreover $y$
is a solution of \eqref{zad}.
\end{lemma}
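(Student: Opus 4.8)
The plan is to read \eqref{UKL2} as a discrete maximum principle. First I would set $g(k)=|\Delta y(k-1)|^{p(k-1)-2}\Delta y(k-1)$ for $k\in[1,T+1]$ and record the elementary but crucial fact that, for each fixed exponent $p(k-1)\ge 2$, the map $t\mapsto |t|^{p(k-1)-2}t$ is strictly increasing and odd; in particular it is sign-preserving, so $g(k)$ and $\Delta y(k-1)=y(k)-y(k-1)$ have the same sign and vanish together. Rewriting the difference equation in \eqref{UKL2} as $g(k+1)-g(k)=-f(k,y_{+}(k))$ for $k\in[1,T]$ and using that $f$ takes values in $(0,+\infty)$, I obtain $g(k+1)-g(k)<0$; that is, $g$ is strictly decreasing along $k=1,\dots,T+1$.

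The heart of the argument is the claim that $y$ admits no interior (weak) local minimum. Suppose some $k_{0}\in[1,T]$ satisfied $y(k_{0})\le y(k_{0}-1)$ and $y(k_{0})\le y(k_{0}+1)$. Then $\Delta y(k_{0}-1)\le 0\le \Delta y(k_{0})$, and by the sign-preserving property $g(k_{0})\le 0\le g(k_{0}+1)$, whence $g(k_{0}+1)-g(k_{0})\ge 0$. This contradicts the strict inequality $g(k_{0}+1)-g(k_{0})=-f(k_{0},y_{+}(k_{0}))<0$ established above, so no such $k_{0}$ exists.

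I would then deduce positivity from the boundary data. Let $M=\min_{0\le k\le T+1}y(k)$; since $y(0)=y(T+1)=0$ we have $M\le 0$. If $M$ were attained at some interior index $k_{0}\in[1,T]$, then $y(k_{0})\le y(k_{0}\pm1)$ would make $k_{0}$ a weak interior local minimum, which is impossible by the previous step. Hence $M$ is attained only at the endpoints, so $M=0$ and $y(k)>0$ for every $k\in[1,T]$. Finally, since $y_{+}(k)=y(k)$ for all $k\in[1,T]$, the term $f(k,y_{+}(k))$ in \eqref{UKL2} coincides with $f(k,y(k))$, and together with $y(0)=y(T+1)=0$ this shows that $y$ solves \eqref{zad}.

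The only delicate point is to obtain strict positivity rather than mere nonnegativity: a priori an interior index could attain the minimal value $0$. This is handled by phrasing the no-local-minimum claim with non-strict inequalities, so that an interior zero of $y$ would itself be a weak local minimum and is thereby excluded. Everything else is a routine unwinding of the sign-preserving monotonicity of $t\mapsto|t|^{p(k-1)-2}t$ together with the strict positivity of $f$.
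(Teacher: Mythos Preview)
Your proof is correct and takes a genuinely different route from the paper's. The paper proceeds variationally: it first checks by direct algebra that $\Delta y(k-1)\,\Delta y_{-}(k-1)\le 0$ for every $k$, then tests the weak formulation \eqref{functional1} against $v=y_{-}$ to obtain
\[
\sum_{k=1}^{T+1}|\Delta y(k-1)|^{p(k-1)-2}\Delta y(k-1)\,\Delta y_{-}(k-1)=\sum_{k=1}^{T}f(k,y_{+}(k))\,y_{-}(k),
\]
whose left-hand side is nonpositive and right-hand side nonnegative, forcing $y_{-}\equiv 0$; strict positivity is then handled by a separate contradiction step using the pointwise equation at a hypothetical interior zero. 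You instead work entirely with the strong form: setting $g(k)=|\Delta y(k-1)|^{p(k-1)-2}\Delta y(k-1)$, the strict positivity of $f$ makes $g$ strictly decreasing, which in one stroke excludes any interior weak local minimum of $y$ and delivers strict positivity directly. Your argument is shorter and more elementary, bypassing the $y_{+}y_{-}$ algebra and the detour through the weak formulation; the paper's approach, on the other hand, is consonant with the variational machinery used elsewhere in the paper and isolates the ``test with $y_{-}$'' trick that is reused in the Palais--Smale estimate of Lemma~\ref{lem3}.
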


\begin{proof}
We will show that 
\begin{equation*}
\Delta y(k-1)\Delta y_{-}(k-1)\leq 0\ \ \text{\textit{for every}}\ \ k\in
\lbrack 1,T+1].\bigskip
\end{equation*}%
Indeed,%
\begin{equation*}
\begin{array}{l}
\Delta y(k-1)\Delta y_{-}(k-1)=(y(k)-y(k-1))(y_{-}(k)-y_{-}(k-1))=\bigskip
\\ 
\left[ \left( y_{+}(k)-y_{+}(k-1)\right) -\left( y_{-}(k)-y_{-}(k-1)\right) %
\right] (y_{-}(k)-y_{-}(k-1))=\bigskip \\ 
\left( y_{+}(k)-y_{+}(k-1)\right) \left( y_{-}(k)-y_{-}(k-1)\right)
-(y_{-}(k)-y_{-}(k-1))^{2}=\bigskip \\ 
y_{+}(k)y_{-}(k)-y_{+}(k)y_{-}(k-1)-y_{+}(k-1)y_{-}(k)+y_{+}(k-1)y_{-}(k-1)-%
\bigskip \\ 
(y_{-}(k)-y_{-}(k-1))^{2}=\bigskip \\ 
-\left[ y_{+}(k)y_{-}(k-1)+y_{+}(k-1)y_{-}(k)+(y_{-}(k)-y_{-}(k-1))^{2}%
\right] \leq 0.%
\end{array}%
\end{equation*}%
Assume that $y\in Y$ is a solution of (\ref{UKL2}). Taking $v=y_{-}$ in (\ref%
{functional1}) we obtain%
\begin{equation*}
\sum\limits_{k=1}^{T+1}|\Delta y(k-1)|^{p(k-1)-2}\Delta y(k-1)\Delta
y_{-}(k-1)=\sum\limits_{k=1}^{T}f(k,y_{+}(k))y_{-}(k).\bigskip
\end{equation*}%
Since the term on the left is non-positive and the one on the right is
non-negative, so this equation holds true if {the both terms are equal zero,
which} leads to $y_{-}(k)=0$ for all $k\in \lbrack 1,T].$ Then $y=y_{+}.$
Therefore $y$ is a positive solution of \eqref{zad}. Arguing by
contradiction, assume that there exists $k\in \lbrack 1,T]$ such that $%
y(k)=0 $, while we can assume $y(k-1)>0$. Then, by \eqref{UKL2} we have 
\begin{equation*}
|y(k+1)|^{p(k)-2}y(k+1)=-y(k-1)^{p(k-1)-1}-f(k,0)<0,
\end{equation*}%
which implies $y(k+1)<0$, a contradiction. So $y(k)>0$ for all $k\in \lbrack
1,T]$.
\end{proof}

\bigskip

Finally we prove that $J$ satisfies Palais-Smale condition.

\begin{lemma}
\label{lem3} Assume that \textbf{(C.1)} holds. Then the functional $J$
satisfies Palais-Smale condition.
\end{lemma}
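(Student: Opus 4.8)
The plan is to verify the Palais--Smale condition directly. Let $(y_n)\subset Y$ be a sequence such that $\{J(y_n)\}$ is bounded and $J'(y_n)\to 0$. Since $Y$ is finite-dimensional, it suffices to show that $(y_n)$ is bounded; then a convergent subsequence exists automatically, and continuity of $J'$ together with $J'(y_n)\to 0$ forces the limit to be a critical point, so no further work is needed to extract the convergent subsequence in the strong sense. Hence the whole task reduces to establishing an a priori bound $\|y_n\|\leq M$.

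\medskip

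First I would assume, for contradiction, that $\|y_n\|\to\infty$, so in particular $\|y_n\|>1$ eventually, and try to extract a coercivity-type estimate by combining the boundedness of $J(y_n)$ with the boundedness of $\langle J'(y_n),y_n\rangle$. The standard device is to look at a combination such as $mJ(y_n)-\langle J'(y_n),y_n\rangle$, chosen so that the positive-exponent leading terms from the two pieces partially cancel: using the explicit formulas for $J$ and $J'$, one gets
\begin{equation*}
mJ(y_n)-\langle J'(y_n),y_n\rangle=\sum_{k=1}^{T+1}\left(\frac{m}{p(k-1)}-1\right)|\Delta y_n(k-1)|^{p(k-1)}+\sum_{k=1}^{T}\bigl(f(k,(y_n)_+(k))(y_n)_+(k)-mF(k,(y_n)_+(k))\bigr),
\end{equation*}
where I have used $(y_n)_+\cdot(y_n)_-=0$ to rewrite the nonlinear part in terms of $(y_n)_+$. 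The first sum is bounded below by $\bigl(\frac{m}{p^+}-1\bigr)\sum_{k=1}^{T+1}|\Delta y_n(k-1)|^{p(k-1)}$, and since $m>p^+$ the coefficient $\frac{m}{p^+}-1$ is strictly positive; applying \textbf{(A.1)} this is bounded below by a positive multiple of $\|y_n\|^{p^-}-T$, which diverges to $+\infty$.

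\medskip

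The key analytic input is condition \textbf{(C.1)}, which I would use to control the nonlinear sum from below. Integrating the lower bound in \textbf{(C.1)} gives $F(k,y)\leq \frac{1}{m}\varphi_2(k)|y|^m+\psi_2(k)y$ and the upper bound in \textbf{(C.1)} controls $f(k,y)y$ from below; combining these so as to bound $f(k,y)y-mF(k,y)$ from below (the $|y|^m$ terms being designed to cancel under the factor $m$, leaving only lower-order terms involving $\psi_1,\psi_2$ and bounded by the sup-norm via \textbf{(A.6)}), one sees that the nonlinear sum grows at most linearly in $\|y_n\|$. Since the left-hand side $mJ(y_n)-\langle J'(y_n),y_n\rangle$ is itself $O(1+\|y_n\|)$ because $\{J(y_n)\}$ is bounded and $|\langle J'(y_n),y_n\rangle|\leq \|J'(y_n)\|\,\|y_n\|\to 0\cdot\|y_n\|$, comparing the $\|y_n\|^{p^-}$ growth on the right with the at-most-linear growth on the left yields a contradiction once $p^->1$ (indeed $p^-\geq 2$).

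\medskip

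The main obstacle I anticipate is the bookkeeping in the nonlinear estimate: one must split $\sum_{k=1}^T\bigl(f(k,(y_n)_+(k))(y_n)_+(k)-mF(k,(y_n)_+(k))\bigr)$ carefully using both inequalities in \textbf{(C.1)} so that the dominant $|y|^m$ contributions cancel and only terms controllable by $\|y_n\|_C$ (hence by $\|y_n\|$ through \textbf{(A.6)}) survive, while keeping track of signs since $m/p(k-1)-1$ varies with $k$. Once that estimate is in place the contradiction is immediate, so the heart of the argument is purely this coercivity computation rather than any compactness subtlety, the latter being free in finite dimensions.
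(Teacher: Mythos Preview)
Your approach has a genuine gap in the nonlinear estimate. You assert that in the combination $f(k,y)y-mF(k,y)$ the $|y|^m$ terms ``are designed to cancel under the factor $m$'', but under \textbf{(C.1)} this is false: the lower bound for $f$ carries the coefficient $\varphi_1(k)$ while the upper bound for $F$ (obtained by integrating the upper bound for $f$) carries $\varphi_2(k)$, and these are in general different (in the paper's own example $\varphi_2=\frac{4+\pi}{2T^2k}>\frac{2}{T^2k}=\varphi_1$). Concretely, from \textbf{(C.1)} one only gets, for $y\geq 0$,
\[
f(k,y)y-mF(k,y)\ \geq\ (\varphi_1(k)-\varphi_2(k))\,y^{m}+(\psi_1(k)-m\psi_2(k))\,y,
\]
and since consistency of the two bounds forces $\varphi_1(k)\leq\varphi_2(k)$, the right-hand side is of order $-c\,y^{m}$ for large $y$. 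Hence your nonlinear sum can be as negative as $-c\sum_k (y_n)_+(k)^{m}$, which by \textbf{(A.3)} and \textbf{(A.5)} can be of order $-c'\|(y_n)_+\|^{m}$. Because $m>p^{+}\geq p^{-}$, this overwhelms the positive contribution $C\|y_n\|^{p^{-}}$ from the gradient part, and no contradiction follows. (There is also a minor slip in your displayed identity: $\langle J'(y_n),y_n\rangle$ contains $\sum_k f(k,(y_n)_+(k))\,y_n(k)$, and the pointwise relation $(y_n)_+(y_n)_-=0$ only lets you replace $y_n$ by $(y_n)_+$ up to the extra term $-\sum_{k}f(k,0)(y_n)_-(k)$; this is $O(\|y_n\|)$ and hence harmless, but it is not zero.)

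The paper circumvents the $mJ-\langle J',y\rangle$ device entirely by bounding the positive and negative parts of $y_n$ separately. First, it tests $J'(y_n)$ against $(y_n)_-$: using only the positivity of $f$ (no growth hypothesis) together with the elementary pointwise inequality $\Delta y\,\Delta y_-\leq 0$, one obtains $\sum_k|\Delta(y_n)_-(k-1)|^{p(k-1)}\leq \|J'(y_n)\|\,\|(y_n)_-\|$, and \textbf{(A.1)} then forces $\{(y_n)_-\}$ to be bounded. Second, it inserts only the \emph{lower} bound of \textbf{(C.1)} into $J(y_n)$, obtaining an estimate of the shape $J(y_n)\leq C(1+\|y_n\|^{p^{+}})-c\|(y_n)_+\|^{m}$; since $\{(y_n)_-\}$ is already bounded and $m>p^{+}$, unboundedness of $\{(y_n)_+\}$ would force $J(y_n)\to-\infty$, contradicting the Palais--Smale hypothesis. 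The decisive feature of this route is that it invokes only one inequality of \textbf{(C.1)} at a time, so the mismatch between $\varphi_1$ and $\varphi_2$ never appears.
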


\begin{proof}
Assume that $\{y_{n}\}$ is such that $\{J(y_{n})\}$ is bounded and $%
J^{\prime }(y_{n})\rightarrow 0$. Since $Y$ is finitely dimensional, it is
enough to show that $\{y_{n}\}$ is bounded. Note that 
\begin{equation*}
\Delta y_{+}(k)\Delta y_{-}(k)\leq 0\text{ \ \ for every \ \ }k\in \lbrack
0,T].
\end{equation*}%
Using the above inequality we obtain%
\begin{equation}
\begin{array}{l}
-\sum\limits_{k=1}^{T+1}|\Delta y(k-1)|^{p(k-1)-2}\Delta y(k-1)\Delta
y_{-}(k-1)=\bigskip \\ 
-\sum\limits_{k=1}^{T+1}|\Delta y(k-1)|^{p(k-1)-2}\Delta
(y_{+}(k-1)-y_{-}(k-1))\Delta y_{-}(k-1)=\bigskip \\ 
-\sum\limits_{k=1}^{T+1}|\Delta y(k-1)|^{p(k-1)-2}\Delta y_{+}(k-1)\Delta
y_{-}(k-1)\bigskip + \\ 
\sum\limits_{k=1}^{T+1}|\Delta y(k-1)|^{p(k-1)-2}\Delta y_{-}(k-1)\Delta
y_{-}(k-1)\geq \bigskip \\ 
\sum\limits_{k=1}^{T+1}|\Delta y(k-1)|^{p(k-1)-2}\left( \Delta
y_{-}(k-1)\right) ^{2}\geq \sum\limits_{k=1}^{T+1}|\Delta
y_{-}(k-1)|^{p(k-1)}.%
\end{array}
\label{eq1}
\end{equation}%
\newline
Since $y_{n}=\left( y_{n}\right) _{+}-\left( y_{n}\right) _{-}$we will show
that $\left\{ (y_{n})_{-}\right\} $ and $\left\{ (y_{n})_{+}\right\} $ are
bounded. Suppose that $\left\{ (y_{n})_{-}\right\} $ is unbounded. Then we
may assume that there exists $N_{0}>0$ such that for $n\geq N_{0}$ we have $%
\left\Vert (y_{n})_{-}\right\Vert \geq T\geq 2$. Using \eqref{eq1} we obtain 
\begin{equation*}
\begin{array}{l}
\left\langle J^{\prime }(y_{n}),(y_{n})_{-}\right\rangle
=\sum\limits_{k=1}^{T+1}|\Delta y_{n}(k-1)|^{p(k-1)-2}\Delta
y_{n}(k-1)\Delta (y_{n})_{-}(k-1)\bigskip \\ 
-\sum\limits_{k=1}^{T}f(k,(y_{n})_{+}(k))(y_{n})_{-}(k)\leq
-\sum\limits_{k=1}^{T+1}|\Delta (y_{n})_{-}(k-1)|^{p(k-1)}.%
\end{array}%
\end{equation*}%
So by \textbf{(A.1)}\ we obtain 
\begin{equation*}
\begin{array}{l}
T^{\frac{2-p^{-}}{2}}\Vert (y_{n})_{-}\Vert ^{p^{-}}-T\leq
\sum\limits_{k=1}^{T+1}|\Delta (y_{n})_{-}(k-1)|^{p(k-1)}\leq \bigskip \\ 
\langle J^{\prime }(y_{n}),-(y_{n})_{-}\rangle \leq \Vert J^{\prime
}(y_{n})\Vert \cdot \Vert (y_{n})_{-}\Vert .\bigskip%
\end{array}%
\end{equation*}%
Next, we see 
\begin{equation*}
\begin{array}{l}
T^{\frac{2-p^{-}}{2}}\Vert (y_{n})_{-}\Vert ^{p^{-}}\leq \Vert J^{\prime
}(y_{n})\Vert \cdot \Vert (y_{n})_{-}\Vert +T\leq \bigskip \\ 
\Vert J^{\prime }(y_{n})\Vert \cdot \Vert (y_{n})_{-}\Vert +\Vert
(y_{n})_{-}\Vert \leq \left( \Vert J^{\prime }(y_{n})\Vert +1\right) \Vert
(y_{n})_{-}\Vert \bigskip%
\end{array}%
\end{equation*}%
and%
\begin{equation*}
T^{\frac{2-p^{-}}{2}}\Vert (y_{n})_{-}\Vert ^{p^{-}-1}\leq \left( \Vert
J^{\prime }(y_{n})\Vert +1\right) .\bigskip
\end{equation*}%
Since for a fixed $\varepsilon >0$ there exists some $N_{1}\geq N_{0}$ such
that $\Vert J^{\prime }(y_{n})\Vert <\varepsilon $ for every $n\geq N_{1}$,
we get 
\begin{equation*}
\Vert (y_{n})_{-}\Vert ^{p^{-}-1}\leq \frac{\left( \varepsilon +1\right) }{%
T^{\frac{2-p^{-}}{2}}}\text{.}
\end{equation*}%
This means that $\left\{ (y_{n})_{-}\right\} $ is bounded.$\bigskip $

Now, we will show that $\{(y_{n})_{+}\}$ is bounded. Suppose that $%
\{(y_{n})_{+}\}$ is unbounded. We may assume that $\Vert (y_{n})_{+}\Vert
\rightarrow \infty $. Since 
\begin{equation*}
f(k,y)\geq \varphi _{1}(k)|y|^{m-2}y+\psi _{1}(k)\text{ \ \ \ for all}\ k\in
\lbrack 1,T],
\end{equation*}%
then 
\begin{equation*}
F(k,y)\geq \frac{\varphi _{1}(k)}{m}|y|^{m}+\psi _{1}(k)y.
\end{equation*}%
Thus by \textbf{(A.3) }and\textbf{\ (A.5)} we obtain 
\begin{equation*}
\sum_{k=1}^{T}F(k,(y_{n})_{+}(k))\geq \frac{\varphi _{1}^{-}}{m}%
\sum_{k=1}^{T}|(y_{n})_{+}(k)|^{m}\geq \frac{\varphi _{1}^{-}}{m}%
2^{-m}(T+1)^{\frac{2-m}{2}}\left\Vert (y_{n})_{+}\right\Vert ^{m},\bigskip
\end{equation*}%
where $\varphi _{1}^{-}=\min_{k\in \left[ 1,T\right] }\varphi _{1}\left(
k\right) .$ Therefore by \textbf{(A.4),}{\ }we have 
\begin{equation*}
\begin{array}{l}
J(y_{n})=\sum\limits_{k=1}^{T+1}\left[ \frac{1}{p(k-1)}|\Delta
y_{n}(k-1)|^{p(k-1)}-F(k,(y_{n})_{+}(k))\right] \leq \bigskip \\ 
2^{p^{+}}(T+1)\left( C_{p^{+}}\left\Vert \allowbreak \left( y_{n}\right)
_{+}-\left( y_{n}\right) _{-}\right\Vert ^{p^{+}}+1\right) -\frac{\varphi
_{1}^{-}}{m}2^{-m}(T+1)^{\frac{2-m}{2}}\left\Vert (y_{n})_{+}\right\Vert
^{m}\leq \bigskip \\ 
2^{p^{+}}(T+1)\left( C_{p^{+}}2^{p^{+}-1}\left( \left\Vert \allowbreak
\left( y_{n}\right) _{+}\left\Vert ^{p^{+}}+\right\Vert \left( y_{n}\right)
_{-}\right\Vert ^{p^{+}}\right) +1\right) -\bigskip \\ 
\frac{\varphi _{1}^{-}}{m}2^{-m}(T+1)^{\frac{2-m}{2}}\left\Vert
(y_{n})_{+}\right\Vert ^{m}\bigskip .%
\end{array}%
\end{equation*}%
Since $p^{+}<m$ {and $\{(y_{n})_{+}\}$ is unbounded and $\{(y_{n})_{-}\}$ is
bounded, so }$J(y_{n})\rightarrow -\infty $. Thus we obtain a contradiction
with the assumption $\{J(y_{n})\}$ is bounded, so $\{(y_{n})_{+}\}$ is
bounded. It follows that $\{y_{n}\}$ is bounded.
\end{proof}

\section{Proof of the main result}

In this section we present the proof of Theorem \ref{maintheorem}. $\bigskip 
$

\begin{proof}
Assume that $y_{0}\in Y$\ is a local minimizer of $J$\ in\textit{\ }%
\begin{equation*}
B:=\{y\in Y:\mu \left( y\right) \leq 0\},
\end{equation*}%
where $\mu (y)=\frac{\Vert y\Vert^2}{2}-\frac{1}{2(T+1)}$. Note that for $%
y\in B$ by \textbf{(A.6)} it follows that for all $k\in \lbrack 1,T]$%
\begin{equation*}
\left\vert y\left( k\right) \right\vert \leq \max_{s\in \lbrack
1,T]}\left\vert y\left( s\right) \right\vert \leq \sqrt{T+1}\left\Vert
y\right\Vert \leq \frac{1}{\sqrt{T+1}}\sqrt{T+1}=1.
\end{equation*}%
We prove that $y_0\in Int B$, by contradiction. Thus suppose otherwise, i.e.
we suppose that $y_{0}\in \partial B$. Then by Theorem \ref{KKT-THEO} there
exists $\sigma \geq 0$ such that for all $v\in Y$

\begin{equation*}
\langle J^{\prime }(y_{0}),v\rangle +\sigma \langle y_{0},v\rangle =0.
\end{equation*}%
\texttt{\ }Hence%
\begin{equation*}
\begin{array}{l}
\sum_{k=1}^{T+1}|\Delta y_{0}(k-1)|^{p(k-1)-2}\Delta y_{0}(k-1)\Delta
v(k-1)-\bigskip \\ 
\bigskip \sum_{k=1}^{T}f(k,(y_{0})_{+}(k))v(k)+\sigma \sum_{k=1}^{T}\langle
y_{0}\left( k\right) ,v\left( k\right) \rangle =0.%
\end{array}%
\end{equation*}%
\newline
Taking $v=y_{0}$, we see that 
\begin{equation*}
\sum_{k=1}^{T+1}|\Delta y_{0}(k-1)|^{p(k-1)}+\sigma \Vert y_{0}\Vert
^{2}=\sum_{k=1}^{T}f(k,(y_{0})_{+}(k))y_{0}(k).
\end{equation*}%
Since $y_{0}\in \partial B,$ we see that $\left\Vert y_{0}\right\Vert =\frac{%
1}{\sqrt{T+1}}$. Thus by \textbf{(A.2)} we have

\begin{equation*}
\sum_{k=1}^{T+1}|\Delta y_{0}(k-1)|^{p(k-1)}+\sigma \Vert y_{0}\Vert
^{2}\geq \sum\limits_{k=1}^{T+1}|\Delta y_{0}(k-1)|^{p(k-1)}\geq T^{\frac{%
p^{+}-2}{2}}\left( \frac{1}{\sqrt{T+1}}\right) ^{p^{+}}.
\end{equation*}%
On the other hand 
\begin{equation*}
\begin{array}{l}
\sum\limits_{k=1}^{T}f(k,(y_{0})_{+}(k))y_{0}(k)= \\ 
\sum\limits_{k=1}^{T}f(k,(y_{0})_{+}(k))(y_{0})_{+}(k)-\sum%
\limits_{k=1}^{T}f(k,(y_{0})_{+}(k))(y_{0})_{-}(k)\leq \\ 
\sum\limits_{k=1}^{T}\varphi _{2}(k)|\left( y_{0}\right)
_{+}(k)|^{m}+\sum\limits_{k=1}^{T}\psi _{2}(k)|\left( y_{0}\right)
_{+}(k)|\leq \sum\limits_{k=1}^{T}\varphi _{2}(k)+\sum\limits_{k=1}^{T}\psi
_{2}(k).%
\end{array}%
\end{equation*}%
Thus%
\begin{equation*}
T^{\frac{p^{+}-2}{2}}\left( \frac{1}{\sqrt{T+1}}\right) ^{p^{+}}\leq
\sum\limits_{k=1}^{T}\left( \varphi _{2}(k)+\psi _{2}(k)\right) .
\end{equation*}%
{A contradiction with the assumption }\textbf{(C.2)}. Hence $y_{0}\in IntB$
and $y_{0}$ is a local minimizer of $J$. Thus $J(y_{0})<\min_{y\in \partial
B}J(y)$.\ We will show that there exists $y_{1}$ such that $y_{1}\in
Y\setminus B$ and $J(y_{1})<\min_{y\in \partial B}J(y)$. Let $y_{\lambda
}\in Y$ be define as follows: $y_{\lambda }(k)=\lambda $ for $k=1,...,T$ and 
$y_{\lambda }(0)=y_{\lambda }(T+1)=0$. Then for $\lambda >1$ we have 
\begin{equation*}
J(y_{\lambda })\leq \frac{\lambda ^{p(0)}}{p(0)}+\frac{\lambda ^{p(T)}}{p(T)}%
-\sum_{k=1}^{T}\frac{\varphi _{1}(k)\lambda ^{m}}{m}\leq \frac{\lambda
^{p^{+}}}{p(0)}+\frac{\lambda ^{p^{+}}}{p(T)}-\frac{\varphi _{1}^{-}\lambda
^{m}}{m}T-\psi _{1}^{-}\lambda T.
\end{equation*}%
Since $m>p^{+}$, then $\lim_{\lambda \rightarrow \infty }J(y_{\lambda
})=-\infty $. Thus there exists $\lambda _{0}$ with $J(y_{\lambda
_{0}})<\min_{y\in \partial B}J(y)$. By Lemma \ref{lem2} and Lemma \ref{lem3}
we obtain a critical value\ of the functional $J$ for some $y^{\star }\in
Y\setminus \partial B$. Then $y_{0}$ and $y^{\star }$ are two different
critical points of $J$ and therefore by Lemma \ref{lem4} these are positive
solutions of problem \eqref{zad}.
\end{proof}

\begin{tabular}{l}
Marek Galewski, Szymon G\l \c{a}b, Renata Wieteska \\ 
Institute of Mathematics, \\ 
Technical University of Lodz, \\ 
Wolczanska 215, 90-924 Lodz, Poland, \\ 
marek.galewski@p.lodz.pl, szymon.glab@p.lodz.pl, renata.wieteska@p.lodz.pl%
\end{tabular}

\end{document}